\documentclass[11pt]{amsart}
\usepackage{amsmath,amssymb,amsthm,mathtools}
\usepackage[margin=1.15in]{geometry}
\usepackage[colorlinks=true,linkcolor=blue,citecolor=blue,urlcolor=blue]{hyperref}

\theoremstyle{plain}
\newtheorem{theorem}{Theorem}
\newtheorem{lemma}[theorem]{Lemma}

\newtheorem{corollary}[theorem]{Corollary}
\theoremstyle{remark}
\newtheorem{remark}[theorem]{Remark}

\numberwithin{equation}{section}

\DeclareMathOperator{\wt}{wt}
\DeclareMathOperator{\CT}{CT}
\DeclareMathOperator{\Newt}{Newt}
\DeclareMathOperator{\supp}{supp}
\newcommand{\C}{\mathbb{C}}
\newcommand{\Q}{\mathbb{Q}}
\newcommand{\Z}{\mathbb{Z}}
\newcommand{\N}{\mathbb{N}}
\newcommand{\E}{\mathbb{E}}
\newcommand{\Qbar}{\overline{\Q}}
\newcommand{\GMC}{\mathrm{GMC}}

\begin{document}

\title[A face-isolation proof of GMC(2)]
      {A face-isolation proof of the two-variable\\ Gaussian Moments Conjecture}

\author{Michael Wilson \\ \small University of Arkansas at Little Rock}

\date{\today}

\subjclass[2020]{Primary 60E05; Secondary 13A35, 52B20, 14R15.}

\keywords{Gaussian Moments Conjecture, Mathieu–Zhao spaces, Newton polygons, p-adic valuations, Laurent polynomials, Factorial Conjecture}

\begin{abstract}
Let $X,Y$ be independent standard real Gaussian random variables and let
$P\in\C[X,Y]$ satisfy $\E(P^m)=0$ for every $m\ge1$. We prove that, in the
complex coordinates $Z=(X+iY)/\sqrt2$ and $W=(X-iY)/\sqrt2$, every monomial of
$P$ has strictly positive weight $\deg_Z-\deg_W$, or every monomial has
strictly negative weight. Consequently $\E(QP^m)=0$ for every $Q\in\C[X,Y]$ and
every $m>\deg Q$, which is the two-variable Gaussian Moments Conjecture with an
explicit threshold. Combined with the known case $n=1$ and Long's
counterexamples for $n\ge3$, this completes the classification of the
dimensions in which the conjecture holds. The main new ingredient is a prime
isolation theorem: for a suitable lower face of the Newton polygon, a
$p$-adic valuation count at moment index $m=qp$ separates the pure-face
contribution from all others and forces the constant terms of the powers of the
face weight polynomial to vanish, after which the theorem of Duistermaat and
van der Kallen applies.
\end{abstract}

\maketitle

\section{Introduction}\label{sec:intro}

Let $X=(X_1,\dots,X_n)$ be a vector of independent standard real Gaussian random
variables. Derksen, van den Essen and Zhao \cite{DvdEZ} formulated the Gaussian
Moments Conjecture $\GMC(n)$: if $P\in\C[x_1,\dots,x_n]$ satisfies
\begin{equation}\label{eq:hyp}
  \E\bigl(P(X)^m\bigr)=0 \qquad (m\ge1),
\end{equation}
then for every $Q\in\C[x_1,\dots,x_n]$ one has $\E\bigl(Q(X)P(X)^m\bigr)=0$ for
all sufficiently large $m$. Equivalently, the space of polynomials with
vanishing Gaussian expectation is a Mathieu--Zhao space in the sense of Mathieu
\cite{Mathieu}. Complex coefficients are essential: for real $P$ the case $m=2$
already forces $P=0$. Derksen, van den Essen and Zhao proved that $\GMC(n)$ for
all $n$ implies the Jacobian Conjecture \cite[Thm.~1.6]{DvdEZ}.

The status of the conjecture changed in July 2026. It holds for $n=1$
\cite[Prop.~4.2]{DvdEZ}. In the opposite direction, Long \cite{Long} exhibited
explicit polynomials showing that $\GMC(n)$ fails for every $n\ge3$: a six-term
cubic in four variables settles $n\ge4$, and a five-term quartic in three
variables settles $n\ge3$. The only dimension left unresolved was $n=2$.

Throughout we work in the complex coordinates
\begin{equation}\label{eq:coords}
  Z=\frac{X+iY}{\sqrt2},\qquad W=\frac{X-iY}{\sqrt2},
\end{equation}
which form an invertible linear change of coordinates, so that
$\C[X,Y]=\C[Z,W]$ and total degree is the same in either. As random variables
$W=\overline{Z}$; both are complex linear polynomials in $X,Y$. We assign the
weight
\[
  \wt(Z^aW^b)=a-b,
\]
so that Gaussian expectation annihilates every monomial of nonzero weight; see
\eqref{eq:moments} below.

Two classes of $P$ were already known to satisfy the conclusion. If $P$ is
homogeneous, $\GMC(2)$ holds for $P$ \cite[Cor.~4.4]{DvdEZ}, by an argument
passing through the theorem of Duistermaat and van der Kallen on Laurent
polynomials \cite{DvdK}. If $P=ZA(U)+WB(U)$ with $U=ZW$, then Long
\cite[\S7]{Long} shows that \eqref{eq:hyp} forces $A=0$ or $B=0$. In both cases
the surviving polynomials are supported on monomials of a single sign of weight,
and Long concludes that a counterexample to $\GMC(2)$ would have to involve a
richer inhomogeneous mixture of positive and negative weights. The purpose of
this paper is to show that no such mixture exists.

\begin{theorem}\label{thm:main}
Let $X,Y$ be independent standard real Gaussian random variables and let
$P\in\C[X,Y]$ satisfy $\E(P^m)=0$ for every $m\ge1$. Write $P$ in the
coordinates \eqref{eq:coords}. Then either every monomial occurring in $P$ has
weight at least $1$, or every monomial occurring in $P$ has weight at most
$-1$. Consequently, for every $Q\in\C[X,Y]$,
\[
  \E\bigl(QP^m\bigr)=0 \qquad\text{whenever } m>\deg Q .
\]
In particular $\GMC(2)$ holds, with the explicit threshold $m\ge\deg Q+1$.
\end{theorem}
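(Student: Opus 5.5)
The proof has two parts. The main part establishes the weight dichotomy; the consequence $\E(QP^m)=0$ for $m>\deg Q$ is then a short bookkeeping step, so I deal with it first. The basic input is the moment formula $\E(Z^aW^b)=\delta_{ab}\,a!$. Suppose every monomial of $P$ has weight $\ge1$. Then every monomial of $P^m$ has weight $\ge m$, while every monomial of $Q$ has weight $\ge-\deg Q$. So each monomial of $QP^m$ has weight $\ge m-\deg Q>0$, and its expectation vanishes. The negative case is symmetric. The real work is therefore to show that $P$ cannot contain monomials of weight $\le0$ together with monomials of the opposite sign.

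\textbf{Step 1: reduction to a Laurent-polynomial statement.} Apply the moment formula to $P=\sum c_{a,b}Z^aW^b$ and expand $P^m$. The hypothesis \eqref{eq:hyp} becomes a statement about constant terms: with the Laurent polynomial $f(t,u)=\sum c_{a,b}\,t^{a-b}u^{b}$ (or a similar encoding), $\E(P^m)$ is a functional of the weight-zero part of $P^m$ in which each $Z^kW^k$ carries the factor $k!$. First, if $P$ has a nonzero constant term, a standard argument (for instance comparing with $m=1,2$ and with the homogeneous top part) forces a contradiction, so I assume $P(0)=0$.

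\textbf{Step 2: choose a face and isolate it prime by prime.} Assume $P$ has monomials of both signs of weight, or of weight $0$. Pick a lower edge (or vertex) $F$ of the Newton polygon of $P$ in the $(a,b)$-plane, chosen for a linear functional $\lambda$ so that the face polynomial $P_F$ contains monomials of both weight signs, or a weight-$0$ monomial. The face weight polynomial is $g(s)=\sum_{(a,b)\in F}c_{a,b}s^{a-b}$, and the goal is to show $\CT(g^m)=0$ for all $m$. Write $\E(P^m)=\sum_k k!\,[\text{coefficient of }(ZW)^k\text{ in }P^m]$. Let $k_0(m)$ be the minimal $k$ reachable, which is attained only by pure-face products. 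That contribution is $k_0!\,\CT(g^m)$, up to a normalization. Every other contribution has $k>k_0$ along $\lambda$. Taking $m=qp$ with $p$ a large prime and $q$ fixed, Legendre's formula should show that $v_p(k!)$ jumps at multiples of $p$. Over the field $\Q(c_{a,b})$, I can specialize to a number field and use a prime of good reduction where the $c_{a,b}$ are units. Then the non-face terms are divisible by a strictly higher power of $p$ than $k_0!$, or else the multinomial coefficients in $P^{qp}$ produce the extra $p$-divisibility via Frobenius-type congruences $(x+y)^p\equiv x^p+y^p$. This should give $\CT(g^{qp})\equiv0$ modulo $p$ for infinitely many $p$. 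Lifting this to $\CT(g^{q})^{p}\equiv \CT(g^{qp})$ (the Frobenius identity $\CT(g^{qp})\equiv\CT(g^q)^{(p)}$ modulo $p$) then gives $\CT(g^q)=0$ for every $q$. This is the main obstacle. I would first attempt a Dwork-style congruence for constant terms, and then control cancellations among the non-face terms by showing that their $k$ exceeds $k_0$ by at least enough to gain a factor $p$ when $m=qp$. Cancellation inside $k_0!$ itself has to be excluded, which may force the choice of face to be a vertex-adjacent edge of specific slope.

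\textbf{Step 3: conclude with Duistermaat--van der Kallen.} By \cite{DvdK}, a Laurent polynomial $g$ in one variable with $\CT(g^m)=0$ for all $m\ge1$ has support entirely in positive exponents or entirely in negative exponents, and in particular contains no constant term. This contradicts the choice of $F$, which contains both signs or a weight-$0$ monomial. Hence no such face exists. I then need a short convexity argument on the Newton polygon: if $P$ had monomials of both signs, or of weight $0$, then some lower face would too. Running over all functionals $\lambda$, for instance those separating the weight-$0$ line, should provide this. This yields the dichotomy and completes the proof.
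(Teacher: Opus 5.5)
Your outline follows the paper's architecture: specialize to a number field, isolate an exposed face at moment index $m=qp$, use $\CT(\overline g^{\,qp})=(\CT\overline g^{\,q})^p$ in characteristic $p$, apply Duistermaat--van der Kallen, and find a mixed face by plane convexity. However, the step you call ``the main obstacle'' is exactly the paper's new ingredient, and you leave it open.

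Take $\ell=ca+db$ with $c+d>0$, $\lambda=\min_S\ell$ and $R_0=q\lambda/(c+d)$. The pure-face part of $\E(P^{qp})$ is $(pR_0)!\,\CT g^{qp}$. For $p>R_0$ the $\mathfrak p$-adic valuation of $(pR_0)!$ is exactly $R_0$, so you must show every other term has valuation at least $R_0+1$. Legendre's formula alone does not give this. A non-face term only satisfies $N\ge pR_0+1$, and $N$ can lie strictly between $pR_0$ and $p(R_0+1)$, where $v(N!)=R_0$.

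The missing argument is a two-case split:
\begin{enumerate}
\item[(a)] If some $k_\alpha$ is prime to $p$, Kummer's theorem gives $p\mid\binom{m}{k}$, while $N\ge pR_0$ gives $v(N!)\ge R_0$.
\item[(b)] If $k=p\,l$, then $\Delta=\sum_\alpha l_\alpha(\ell(\alpha)-\lambda)$ is a positive integer and $(c+d)N=p(q\lambda+\Delta)$. Choose $q$ so that $R_0\in\N$ (for other $q$ one has $\CT g^q=0$ automatically) and take $p\nmid(c+d)$. This forces $(c+d)\mid\Delta$, hence $N\ge p(R_0+1)$.
\end{enumerate}
No Dwork-type congruence and no special slope of the face are needed.

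The convexity step is also only asserted, and the class of faces matters. You need forms with $c+d>0$, since that is what lets $(c+d)N=\sum_\alpha k_\alpha\ell(\alpha)$ control $N$. But $c$ or $d$ may be negative, and ``lower'' faces with nonnegative normals do not suffice. For $S=\{(0,1),(4,3),(0,5)\}$, every such face carries weights of one strict sign; the mixed face is the edge cut out by $\ell=-a+2b$.

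You also need that $w$ is injective on the line $\ell=\lambda$ when $c+d\ne0$. Only then does $g$ have no cancellation, so the offending face monomials actually appear as exponents of $g$.

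Your plan to treat weight zero by the same face argument, instead of the one-variable Factorial Conjecture the paper uses in its Step 2, is viable. It cannot go through $w$ itself, where $c+d=0$. Instead perturb to $w+\varepsilon(a+b)$, which exposes the lowest diagonal point of $S$ as a vertex. Then $g$ is a nonzero constant, and the isolation identity $\CT g^{Bn}=0$ is absurd. This also replaces your unsupported constant-term claim in Step 1.

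Finally, the number-field specialization must preserve both the support and infinitely many moment equations. That is where Hilbert's basis theorem and the Nullstellensatz on $V(I)\cap D(\prod_\alpha c_\alpha)$ enter.
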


Together with $\GMC(1)$ \cite[Prop.~4.2]{DvdEZ} and \cite{Long}, this settles
the conjecture in every dimension: $\GMC(n)$ is true for $n\le2$ and false for
$n\ge3$.

The proof occupies Sections \ref{sec:prelim}--\ref{sec:proof}. Section
\ref{sec:prelim} records the moment evaluation and the elementary weight bound
that supplies the threshold. Section \ref{sec:spec} reduces to algebraic
coefficients; the point of that reduction, emphasised in Remark
\ref{rem:supportonly}, is that everything proved afterwards about the Newton
polygon is a statement about the \emph{support} of $P$ alone, so it transfers
back to the original complex polynomial. Section \ref{sec:isolation} contains
the main new ingredient, a prime isolation theorem for an arbitrary lower face
of the Newton polygon. Section \ref{sec:edge} shows that a Newton polygon
carrying weights of both signs always exposes a face to which prime isolation
applies, and Section \ref{sec:proof} assembles the proof. Section
\ref{sec:remarks} explains why the argument is specific to $n=2$ and records two
corollaries.

\section{Coordinates, moments and weights}\label{sec:prelim}

Throughout, $X$ and $Y$ are independent standard real Gaussians and $Z,W$ are as
in \eqref{eq:coords}. We use repeatedly the evaluation
\begin{equation}\label{eq:moments}
  \E\bigl(Z^aW^b\bigr)=\delta_{ab}\,a! \qquad (a,b\ge0).
\end{equation}
Indeed, the law of $(X,Y)$ is invariant under rotation, which multiplies
$Z^aW^b$ by $e^{i(a-b)\theta}$; this forces the expectation to vanish when
$a\ne b$. When $a=b$, the variable $ZW=(X^2+Y^2)/2$ is exponentially
distributed with mean one, whence $\E\bigl((ZW)^a\bigr)=a!$.

\begin{remark}
We stress that $Z$ and $W$ are \emph{not} an independent pair: as random
variables $W=\overline Z$, and $\E(Z^aW^a)=a!$ rather than
$\bigl((a-1)!!\bigr)^2$. Equation \eqref{eq:moments} is an identity about two
independent \emph{real} Gaussians written in complex linear coordinates; no
complex Gaussian measure is used anywhere in this paper.
\end{remark}

We write $\supp P\subset\N^2$ for the set of exponent pairs occurring in $P$,
and $\Newt(P)=\mathrm{conv}(\supp P)\subset\mathbb R_{\ge0}^2$ for its Newton
polygon.

\begin{lemma}[One-sided weights]\label{lem:weights}
Let $P\in\C[X,Y]$ be such that every monomial occurring in $P$ has weight at
least $1$, or such that every monomial occurring in $P$ has weight at most
$-1$. Then $\E(P^m)=0$ for every $m\ge1$, and for every $Q\in\C[X,Y]$,
\[
  \E\bigl(QP^m\bigr)=0 \qquad\text{whenever } m>\deg Q .
\]
\end{lemma}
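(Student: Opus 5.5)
By symmetry it suffices to treat the case where every monomial of $P$ has weight at least $1$. The case of weights at most $-1$ follows by complex conjugation of coordinates: the substitution $Z\leftrightarrow W$ (that is, $Y\mapsto -Y$) preserves the Gaussian law, negates all weights and preserves degrees. The whole argument rests on the moment formula \eqref{eq:moments} together with additivity of weight under multiplication: $\wt(Z^aW^b\cdot Z^cW^d)=\wt(Z^aW^b)+\wt(Z^cW^d)$.

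For the first claim, I expand $P^m$ as a sum of products of $m$ monomials of $P$. Each such product has weight at least $m\ge1$, so every monomial in $P^m$ has nonzero weight. By \eqref{eq:moments}, $\E(Z^aW^b)=0$ whenever $a\ne b$, and linearity of expectation then gives $\E(P^m)=0$.

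For the second claim, I write $Q=\sum c_{ab}Z^aW^b$ with $a+b\le\deg Q$. Here I use that total degree in $Z,W$ equals total degree in $X,Y$, since the change of coordinates is linear and invertible. Each monomial of $Q$ has weight $a-b\ge -b\ge-\deg Q$. A monomial of $QP^m$ is a product of a monomial of $Q$ with $m$ monomials of $P$, so its weight is at least $m-\deg Q$, which is at least $1$ when $m>\deg Q$. Again \eqref{eq:moments} kills every term, so $\E(QP^m)=0$.

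There is no real obstacle in this argument. The only care needed is the bound $\wt\ge-\deg Q$ for the monomials of $Q$, and in noting that cancellation among terms cannot create a weight-zero monomial: each term individually has weight at least $m-\deg Q$.
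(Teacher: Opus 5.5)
Your proof is correct and follows the paper's argument essentially verbatim: weight additivity gives every monomial of $QP^m$ weight at least $m-\deg Q$, the moment formula annihilates each one, and the negative case follows from the substitution $Y\mapsto -Y$ exchanging $Z$ and $W$. The only cosmetic difference is that the paper obtains $\E(P^m)=0$ as the special case $Q=1$ instead of proving it separately.
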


\begin{proof}
Assume every monomial of $P$ has weight at least $1$. Weight is additive on
products, so every monomial occurring in $P^m$ has weight at least $m$. Write
$Q=\sum_{i,j}q_{ij}Z^iW^j$; each monomial of $Q$ has weight $i-j$ with
$|i-j|\le i+j\le\deg Q$. Hence every monomial of $QP^m$ has weight at least
$m-\deg Q$, which is strictly positive as soon as $m>\deg Q$. By
\eqref{eq:moments} Gaussian expectation annihilates each such monomial, so
$\E(QP^m)=0$. Taking $Q=1$ gives $\E(P^m)=0$ for $m\ge1$. The case of weights at
most $-1$ follows by exchanging $Z$ and $W$, which is the substitution
$Y\mapsto-Y$ and leaves the law of $(X,Y)$ invariant. The zero polynomial
satisfies both hypotheses vacuously.
\end{proof}

\section{Reduction to algebraic coefficients}\label{sec:spec}

\begin{lemma}[Algebraic specialization]\label{lem:spec}
Fix a finite set $S\subset\N^2$. Suppose there exists
$P=\sum_{\alpha\in S}c_\alpha Z^{a_\alpha}W^{b_\alpha}\in\C[Z,W]$ with
$c_\alpha\ne0$ for every $\alpha\in S$ and $\E(P^m)=0$ for every $m\ge1$. Then
there exists such a polynomial with all coefficients in $\Qbar$.
\end{lemma}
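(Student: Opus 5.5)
The plan is to recast the hypothesis as a system of polynomial equations with rational coefficients in the unknown coefficients, and then use that a variety defined over $\Q$ which has a point in an open subset also has a $\Qbar$-point there.

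Introduce indeterminates $t=(t_\alpha)_{\alpha\in S}$ and the generic polynomial $P_t=\sum_{\alpha\in S}t_\alpha Z^{a_\alpha}W^{b_\alpha}$. Expanding $P_t^m$ multinomially and applying \eqref{eq:moments} termwise gives
\[
  F_m(t):=\E(P_t^m)=\sum_{\substack{k\in\N^S,\ |k|=m\\ \sum_\alpha k_\alpha a_\alpha=\sum_\alpha k_\alpha b_\alpha}}\binom{m}{k}\Bigl(\sum_\alpha k_\alpha a_\alpha\Bigr)!\;\prod_{\alpha}t_\alpha^{k_\alpha},
\]
which lies in $\Z[t]$. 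Let $I\subset\Q[t]$ be the ideal generated by all $F_m$ with $m\ge1$. By the Hilbert basis theorem $I$ is generated by finitely many of them, say $F_1,\dots,F_N$. Over $\C$ the common zero set of all the $F_m$ therefore equals $V(F_1,\dots,F_N)$, because each $F_m$ lies in $I\otimes\C$.

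The hypothesis says that the coefficient vector $c=(c_\alpha)$ of $P$ is a point of $V=V(F_1,\dots,F_N)\subset\C^S$ lying in the open set $\{\prod_\alpha t_\alpha\ne0\}$. To remove the open condition, adjoin one more variable $s$ and consider
\[
  J=(F_1,\dots,F_N,\;1-s\prod_\alpha t_\alpha)\subset\Q[t,s].
\]
The point $(c,\,(\prod c_\alpha)^{-1})$ is a common complex zero of $J$. Hence $1\notin J\C[t,s]$, and since $J\C[t,s]\cap\Q[t,s]=J$ by faithful flatness of field extension, also $1\notin J$. The weak Nullstellensatz over the algebraically closed field $\Qbar$, applied to $J\Qbar[t,s]$, which is a proper ideal for the same reason, then gives a common zero $(c',s')\in\Qbar^{S}\times\Qbar$. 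The relation $s'\prod c'_\alpha=1$ forces every $c'_\alpha\ne0$. Finally, $F_m(c')=0$ for all $m$, since each $F_m\in I$ is a $\Q[t]$-combination of $F_1,\dots,F_N$. Then $P'=\sum c'_\alpha Z^{a_\alpha}W^{b_\alpha}$ has support exactly $S$, coefficients in $\Qbar$, and $\E(P'^m)=0$ for every $m\ge1$.

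The only step needing care is passing from infinitely many moment conditions to finitely many; Noetherianity of $\Q[t]$ handles this. The Rabinowitsch variable $s$ handles the nonvanishing requirement. Everything else is routine: the rationality of $F_m$ follows from the explicit formula \eqref{eq:moments}, with integer moments $a!$.
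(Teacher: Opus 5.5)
Your proof is correct and is essentially the paper's argument: the moment polynomials $F_m\in\Z[t]$, the Hilbert basis theorem to reduce to finitely many of them, and the Nullstellensatz over $\Qbar$ applied to the $\Q$-defined locus $V(I)\cap\{\prod_\alpha t_\alpha\ne0\}$. Your Rabinowitsch variable $s$ simply spells out the step the paper states as ``a constructible set defined over $\Q$ with a $\C$-point has a $\Qbar$-point.''
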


\begin{proof}
By \eqref{eq:moments}, for each $m\ge1$,
\begin{equation}\label{eq:momentexp}
  \E(P^m)=\sum_{\substack{k\in\N^{S},\ |k|=m\\ A(k)=B(k)}}
  \binom{m}{k}\Bigl(\prod_{\alpha\in S}c_\alpha^{k_\alpha}\Bigr)A(k)!,
  \qquad
  A(k)=\sum_\alpha k_\alpha a_\alpha,\quad B(k)=\sum_\alpha k_\alpha b_\alpha,
\end{equation}
which is a polynomial $\varphi_m\in\Z[c_\alpha:\alpha\in S]$ in the
coefficients. Let $I\subset\Q[c_\alpha]$ be the ideal generated by
$\{\varphi_m\}_{m\ge1}$; it is finitely generated by the Hilbert basis theorem.
Let $f=\prod_{\alpha\in S}c_\alpha$. The set
$V(I)\cap D(f)$ is a constructible set defined over $\Q$, and by hypothesis it
has a $\C$-point. Since $\Qbar$ is algebraically closed and
$\Qbar\subset\C$, the Nullstellensatz gives a $\Qbar$-point of the same set,
which is the required polynomial.
\end{proof}

\begin{remark}[Why this reduction is legitimate]\label{rem:supportonly}
Lemma \ref{lem:spec} produces a \emph{different} polynomial, so it is worth
recording precisely how it will be used. Step 1 of Section \ref{sec:proof}
argues by contradiction: from a complex $P$ with support $S$ carrying weights of
both signs and satisfying \eqref{eq:hyp}, Lemma \ref{lem:spec} supplies
$P'\in\Qbar[Z,W]$ with the same support and the same vanishing, and Lemmas
\ref{lem:isolation} and \ref{lem:edge} applied to $P'$ then contradict each
other. Both of those conclusions are assertions about the exposed face
$F\subset S$ alone ,  which weights occur on $F$ ,  and mention no
coefficients, so the contradiction is not attached to the particular
$\Qbar$-point produced. What transfers the conclusion back to $P$ is simply the
contrapositive of Lemma \ref{lem:spec}: no polynomial over $\Qbar$ with support
$S$ satisfies \eqref{eq:hyp}, hence none over $\C$ does either. This is the only
use we make of Lemma \ref{lem:spec}.
\end{remark}

\section{Prime isolation of a lower face}\label{sec:isolation}

Let $P\in\Qbar[Z,W]$ with $\supp P=S$, let $K=\Q(c_\alpha:\alpha\in S)$, a
number field, and let $\mathcal O_K$ be its ring of integers. For an integral
linear form
\[
  \ell(a,b)=ca+db, \qquad c,d\in\Z,\quad c+d>0,
\]
put $\lambda=\min_{\alpha\in S}\ell(\alpha)$ and let
$F=\{\alpha\in S:\ell(\alpha)=\lambda\}$ be the exposed face. We may and do
assume $\gcd(c,d)=1$: replacing $(c,d)$ by $(c,d)/\gcd(c,d)$ scales $\lambda$
and $c+d$ by the same factor, leaves $F$ and $\lambda/(c+d)$ unchanged, and only
shrinks the set of primes excluded below. Define the \emph{face weight
polynomial}
\[
  R(z)=\sum_{\alpha\in F}c_\alpha z^{a_\alpha-b_\alpha}
  \ \in\ K[z,z^{-1}].
\]

\begin{remark}[No cancellation in $R$]\label{rem:injective}
The weight $w(a,b)=a-b$ is injective on $F$. Indeed, if $\alpha,\alpha'\in F$
satisfy $w(\alpha)=w(\alpha')$, then $v=\alpha-\alpha'$ lies in
$\ker\ell\cap\ker w$, so $cv_1+dv_2=0$ and $v_1=v_2$, whence $(c+d)v_1=0$ and
$v=0$ because $c+d>0$. Equivalently, $\ker\ell=\langle(d,-c)\rangle$ and
$\ker w=\langle(1,1)\rangle$ meet only in the origin, since
$\det\begin{psmallmatrix}d&-c\\1&1\end{psmallmatrix}=c+d\ne0$.

Consequently $R$ has exactly $|F|$ terms, with pairwise distinct exponents and
with the nonzero coefficients $c_\alpha$: no cancellation can occur, and the
map $\alpha\mapsto w(\alpha)$ is a bijection from $F$ onto the exponent set of
$R$. This is what licenses the passage, in Lemma \ref{lem:isolation}, from a
statement about the exponents of $R$ to a statement about the weights occurring
on $F$. Without it a weight-zero point of $F$ could in principle be cancelled
out of $R$ by another face point of the same weight, and the contradiction in
Step 1 of Section \ref{sec:proof} would fail in exactly the sub-case where $w$
vanishes somewhere on $F$.
\end{remark}

\begin{lemma}[Prime isolation]\label{lem:isolation}
With the notation above, suppose $\E(P^m)=0$ for every $m\ge1$ and
$\lambda/(c+d)\ge0$. Let $B\ge1$ be the denominator of $\lambda/(c+d)$ in lowest
terms. Then
\begin{equation}\label{eq:CTvanish}
  \CT R(z)^{Bn}=0 \qquad (n\ge1).
\end{equation}
Consequently every exponent occurring in $R$ is strictly positive, or every
exponent occurring in $R$ is strictly negative. By Remark \ref{rem:injective}
this says equivalently that all weights occurring on $F$ have the same strict
sign.
\end{lemma}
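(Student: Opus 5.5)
The plan is to compare $p$-adic valuations of the terms in the moment expansion \eqref{eq:momentexp} at a moment index of the form $m=Bnp$, with $p$ a large prime. The aim is to show that, after removing a common $p$-adic factor, only the \emph{pure-face} terms survive modulo a prime $\mathfrak p\mid p$ of $\mathcal O_K$. Those surviving terms assemble into a Frobenius twist of $\CT R(z)^{Bn}$, which is therefore divisible by infinitely many primes and hence $0$. The sign statement then follows from Duistermaat--van der Kallen \cite{DvdK}.

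\textbf{Bookkeeping.} For $k\in\N^S$ with $|k|=m$, put $L(k)=\sum_\alpha k_\alpha\ell(\alpha)$, so that $L(k)\ge m\lambda$, with equality iff $\supp k\subset F$. On the diagonal $A(k)=B(k)$ we have $L(k)=(c+d)A(k)$. Hence every surviving term has
\[
A(k)\ge N:=m\lambda/(c+d),
\]
with equality exactly for pure-face $k$. When $B\mid m$, $N$ is a nonnegative integer. Moreover, non-face diagonal terms have $A(k)\ge N+1$, because $A(k)$ is an integer and $\gcd(c,d)=1$.

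\textbf{Choice of primes.} Take $p$ outside the finitely many primes dividing $c+d$, the denominators of the $c_\alpha$, or their norms, so that all $c_\alpha$ are $\mathfrak p$-units. Then compute $v_p$ of each term $\binom{m}{k}\prod c_\alpha^{k_\alpha}A(k)!$ with $m=Bnp$.

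\textbf{Pure-face terms.} Lucas' theorem gives $v_p\binom{m}{k}=0$ iff $p\mid k_\alpha$ for all $\alpha$, in which case the term reduces to the $k/p$ term of the smaller index $Bn$. All such terms carry the same factor $N!$.

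\textbf{Non-face terms.} Here one must show that either the multinomial or the excess $A(k)!/N!$ contributes an extra factor of $p$. Since $N=p\cdot(Bn\lambda/(c+d))$ is divisible by $p$, $A(k)!/N!$ is a product of consecutive integers starting at $N+1$. So the extra $p$ must come from the interplay of carries in $\binom{m}{k}$ with how far $A(k)$ exceeds $N$. I would organize this by writing $k=pk'+r$ with $0\le r_\alpha<p$. If $r\ne0$, the multinomial already has positive valuation, since $|r|$ is a multiple of $p$ and $\binom{m}{k}$ is divisible by $\binom{p}{r}$-type factors. If $r=0$, then $A(k)-N=p\,(A(k')-N/p)\ge p$, so $(N+1)\cdots A(k)$ contains a multiple of $p$.

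\textbf{Main obstacle.} The delicate step is the uniform estimate in the mixed case. There the valuation gained from carries must not be offset by extra $p$'s in $A(k)!/N!$ relative to pure-face terms, and I must also rule out pure-face terms with $r\ne0$ contributing at minimal valuation. I expect this to be the heart of the proof. I would attack it first by the Legendre formula $v_p(x!)=(x-s_p(x))/(p-1)$, treating the total valuation as an affine function of digit sums that is minimized exactly at $r=0$, $\supp k\subset F$.

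\textbf{Concluding from the valuation count.} Granting the estimate, dividing $\E(P^m)=0$ by $N!$ and reducing modulo $\mathfrak p$ gives
\[
\sum_{\supp k'\subset F,\ A(k')=B(k')}\binom{Bn}{k'}\prod c_\alpha^{pk'_\alpha}\equiv 0\pmod{\mathfrak p}.
\]
By Frobenius this is $\sigma_{\mathfrak p}\bigl(\CT R^{Bn}\bigr)\equiv0$. Note that the condition $A(k')=B(k')$ on the face is exactly weight $0$, i.e. the constant term, via Remark \ref{rem:injective}. Thus the algebraic number $\CT R^{Bn}$ lies in infinitely many prime ideals, so it vanishes, which is \eqref{eq:CTvanish}.

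\textbf{Sign statement.} Apply \cite{DvdK} to the one-variable Laurent polynomial $R^B$, whose powers all have zero constant term. This shows that $0$ is not in the convex hull of the exponents of $R^B$. The extreme exponents of $R^B$ are $B$ times those of $R$ with nonzero leading coefficients, so the same holds for $R$. Hence all exponents of $R$ are strictly positive or all strictly negative, and Remark \ref{rem:injective} translates this into the statement about the weights occurring on $F$.
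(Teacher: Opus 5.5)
Your proposal is correct and is essentially the paper's proof. It uses the moment index $m=Bnp$, isolates the pure-face terms, and splits the remaining terms into the paper's two cases: a carry in $\binom{m}{k}$ when some $p\nmid k_\alpha$, and $A(k)\ge N+p$ when $k=pk'$. It then uses the $p$-th power congruence to get $(\CT R^{Bn})^p\equiv 0\pmod{\mathfrak p}$, and finishes with Duistermaat--van der Kallen applied to $R^B$. Read your $\sigma_{\mathfrak p}$ as this $p$-th power map rather than an automorphism, since $K$ need not be Galois. Your normalization by $N!$, together with $A(pk')=pA(k')$, also makes the paper's conditions $p>R_0$ and $p\nmid(c+d)$ unnecessary.

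The ``main obstacle'' you flag does not exist, and nothing needs to be granted. Since $A(k)\ge N$, the factor $A(k)!/N!$ is an integer and can only raise valuations, never offset them. So your two cases already show that every term of \eqref{eq:momentexp}, other than the pure-face terms with $p\mid k_\alpha$ for all $\alpha$, has $\mathfrak p$-adic valuation exceeding $v_{\mathfrak p}(N!)$. For those remaining terms you only use the Lucas congruence $\binom{pBn}{pk'}\equiv\binom{Bn}{k'}\pmod p$. Your stated ``iff'' can fail when $p\le Bn$, but you never use that direction, so it does no harm.
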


\begin{proof}
Fix $n\ge1$ and set $q=Bn$, so that
\[
  R_0:=\frac{q\lambda}{c+d}\in\N .
\]
Call a rational prime $p$ \emph{good} (for the data $P,\ell,q$) if
\begin{enumerate}
\item[(G1)] $p$ is unramified in $K$;
\item[(G2)] every $c_\alpha$ is $\mathfrak p$-integral for every prime
$\mathfrak p$ of $\mathcal O_K$ above $p$;
\item[(G3)] $p\nmid(c+d)$;
\item[(G4)] $p>R_0$.
\end{enumerate}
All but finitely many primes satisfy (G1)--(G3), and (G4) excludes finitely many
more once $q$ is fixed; so infinitely many good primes exist. Note that (G2) asks
only for integrality, not for the reductions $\overline{c_\alpha}$ to be
nonzero: the moment computation below takes place in characteristic zero and is
reduced only at the very end, so a coefficient meeting $\mathfrak p$ does no
harm. Fix a good $p$, fix $\mathfrak p\mid p$, and let $v=v_{\mathfrak p}$ be
normalised by $v(p)=1$, which is possible by (G1). Set $m=qp$.

Consider the expansion \eqref{eq:momentexp} of $\E(P^m)$. For a multi-index $k$
occurring there we have $A(k)=B(k)=:N$, and therefore
\begin{equation}\label{eq:balance}
  (c+d)N=cA(k)+dB(k)=\sum_{\alpha}k_\alpha\ell(\alpha).
\end{equation}
Since $\ell(\alpha)\ge\lambda$ for every $\alpha\in S$, \eqref{eq:balance} gives
$(c+d)N\ge m\lambda=qp\lambda$, hence
\begin{equation}\label{eq:Nlower}
  N\ \ge\ pR_0 .
\end{equation}

\smallskip
\noindent\emph{The pure-face contribution.} Suppose $k_\alpha=0$ for every
$\alpha\notin F$. Then \eqref{eq:balance} reads $(c+d)N=qp\lambda$, so
$N=pR_0$, independently of $k$. Moreover the balance condition $A(k)=B(k)$ is
exactly $\sum_\alpha k_\alpha(a_\alpha-b_\alpha)=0$, which is the condition
selecting the constant term of $R^{m}$. Hence the sum of all pure-face terms in
\eqref{eq:momentexp} equals
\begin{equation}\label{eq:pureface}
  (pR_0)!\,\CT R(z)^{qp}.
\end{equation}

\smallskip
\noindent\emph{Every other contribution is divisible by $p^{R_0+1}$.} Suppose
$k_\beta>0$ for some $\beta\notin F$. We distinguish two cases.

\emph{Case 1: $p\nmid k_\alpha$ for some $\alpha$.} The base-$p$ units digit of
$m=qp$ is $0$, while the units digits of the $k_\alpha$ sum to a multiple of
$p$ which is positive, hence at least $p$. Adding the $k_\alpha$ in base $p$
therefore produces a carry, and by Kummer's theorem for multinomial
coefficients, $v\bigl(\binom{m}{k}\bigr)\ge1$. By \eqref{eq:Nlower},
\[
  v(N!)\ \ge\ \Bigl\lfloor\frac{N}{p}\Bigr\rfloor\ \ge\ R_0 ,
\]
so the term has valuation at least $R_0+1$.

\emph{Case 2: $k_\alpha=p\,l_\alpha$ for every $\alpha$.} Then
$\sum_\alpha l_\alpha=q$, and
\[
  \Delta:=\sum_\alpha l_\alpha\bigl(\ell(\alpha)-\lambda\bigr)
\]
is a positive integer, since $\ell(\beta)>\lambda$ and $l_\beta>0$. Now
\eqref{eq:balance} gives $(c+d)N=p(q\lambda+\Delta)$. As $q\lambda=R_0(c+d)$ is
divisible by $c+d$, we get $(c+d)\mid p\Delta$, and $p\nmid(c+d)$ by (G3),
whence $(c+d)\mid\Delta$ and $\Delta/(c+d)\ge1$. Therefore
\[
  N=p\Bigl(R_0+\frac{\Delta}{c+d}\Bigr)\ \ge\ p(R_0+1),
  \qquad
  v(N!)\ \ge\ \Bigl\lfloor\frac{N}{p}\Bigr\rfloor\ \ge\ R_0+1 ,
\]
so again the term has valuation at least $R_0+1$.

\smallskip
\noindent\emph{Conclusion of the valuation count.} By hypothesis
$\E(P^{qp})=0$. Subtracting the contributions just bounded, \eqref{eq:pureface}
gives
\[
  (pR_0)!\,\CT R^{qp}\ \equiv\ 0 \pmod{\mathfrak p^{R_0+1}} .
\]
Since $p>R_0$ by (G4), we have
$v\bigl((pR_0)!\bigr)=\lfloor pR_0/p\rfloor=R_0$, and dividing by $p^{R_0}$
yields
\[
  \CT R^{qp}\ \equiv\ 0 \pmod{\mathfrak p} .
\]
Let $\kappa=\mathcal O_K/\mathfrak p$, a field of characteristic $p$, and let
$\overline R\in\kappa[z,z^{-1}]$ be the reduction of $R$, which is defined by
(G2). Writing $\overline R=\sum_i \bar a_iz^i$ we have
$\overline R^{\,p}=\sum_i\bar a_i^{\,p}z^{ip}$, whence
\[
  \CT \overline R^{\,qp}
  =\CT\bigl(\overline R^{\,q}\bigr)^{p}
  =\bigl(\CT \overline R^{\,q}\bigr)^{p} .
\]
(We emphasise that the correct identity is $\CT(g^p)=(\CT g)^p$; the variant
$\CT(g^p)=\CT(g)$ is valid only when the coefficients lie in the prime field,
which need not hold here.) Since $\kappa$ is a field, it follows that
$\CT R^{q}\equiv0\pmod{\mathfrak p}$. This holds for infinitely many good
primes $p$, and an element of $K$ lying in infinitely many distinct primes of
$\mathcal O_K$ is zero; hence $\CT R^{q}=0$. As $n\ge1$ was arbitrary and
$q=Bn$, this proves \eqref{eq:CTvanish}.

\smallskip
\noindent\emph{Application of Duistermaat--van der Kallen.} Put $g=R^{B}$, a
Laurent polynomial in one variable. By \eqref{eq:CTvanish},
$\CT(g^{n})=\CT R^{Bn}=0$ for every $n\ge1$. By the theorem of Duistermaat and
van der Kallen \cite{DvdK}, a Laurent polynomial in one variable which is
neither a polynomial in $z$ nor a polynomial in $z^{-1}$ has some positive power
with nonzero constant term; combined with $\CT(g)=0$ this forces
$g\in z\C[z]$ or $g\in z^{-1}\C[z^{-1}]$, equivalently $0\notin\Newt(g)$. Since
the extreme coefficients of $R^{B}$ are powers of the extreme coefficients of
$R$ and hence nonzero, $\Newt(R^{B})=B\cdot\Newt(R)$, and as $B\ge1$ we conclude
$0\notin\Newt(R)$. In one variable this says exactly that every exponent of $R$
is strictly positive or every exponent is strictly negative.
\end{proof}

\section{The mixed-edge lemma}\label{sec:edge}

\begin{lemma}[Mixed edge]\label{lem:edge}
Let $S\subset\N^2$ be finite and $\Delta=\mathrm{conv}(S)\subset\mathbb
R_{\ge0}^2$. Suppose the weight $w(a,b)=a-b$ takes both a positive and a
negative value on $S$. Then there is an integral linear form
$\ell(a,b)=ca+db$ with $c+d>0$ such that, writing
$\lambda=\min_{\alpha\in S}\ell(\alpha)$ and $F=\{\alpha\in S:\ell(\alpha)=\lambda\}$,
\begin{enumerate}
\item[(i)] $\lambda/(c+d)\ge0$, and
\item[(ii)] $w$ does not have one strict sign on $F$: either $w$ vanishes
somewhere on $F$, or $w$ takes both signs on $F$.
\end{enumerate}
\end{lemma}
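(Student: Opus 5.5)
The plan is to construct $\ell$ from the point where $\Delta$ first meets the diagonal $\{a=b\}$, and to take $F$ to be the face of $\Delta$ supported at that point.

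\textbf{Step 1: the lowest diagonal point.} Since $w$ takes both a positive and a negative value on $S$, the convex set $\Delta$ contains points on both sides of the diagonal. Hence $\Delta$ meets the diagonal, and $\Delta\cap\{a=b\}$ is a compact segment $\{(s,s):t\le s\le t'\}$. Let $x_0=(t,t)$ be its lowest point; $t\ge0$ because $\Delta\subset\mathbb R_{\ge0}^2$. Since $\Delta$ is a rational polygon and the diagonal is a rational line, $t\in\Q$. Let $G$ be the smallest face of $\Delta$ containing $x_0$. It is either a vertex or an edge, since $x_0$ is not interior to $\Delta$: the points $(s,s)$ with $s<t$ are outside $\Delta$.

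\textbf{Step 2: choosing the normal.} Choose a linear form $\ell=ca+db$ whose minimum over $\Delta$ is attained exactly on $G$.
\begin{itemize}
\item If $G$ is an edge, $(c,d)$ is its inward normal, which is rational and can be scaled to be integral.
\item If $G$ is a vertex, the set of such $(c,d)$ is an open two-dimensional cone, so it contains integral vectors.
\end{itemize}
We then need $c+d>0$. Since $x_0-(\varepsilon,\varepsilon)\notin\Delta$, the segment $\{(s,s):s<t\}$ stays off $\Delta$, and some supporting form at $x_0$ satisfies $\ell(1,1)=c+d\ge0$.

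The case $c+d=0$ is impossible. Then $\ell$ would be a nonzero multiple of $w$, so $w$ would be extremal on $\Delta$ at a point of weight $0$. That contradicts $w$ taking both signs on $S$.

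In the vertex case, the open normal cone is full-dimensional and contains a vector with $c+d>0$, so we may perturb to an integral vector with $c+d>0$ that still cuts out exactly the vertex. The exceptional case $t=0$, where the origin is a vertex of $\Delta$ lying in $S$, is handled directly by $\ell=a+b$ and $F=\{(0,0)\}$.

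\textbf{Step 3: verifying (i) and (ii).} Since $x_0\in G$ realizes the minimum, we get $\lambda=\ell(x_0)=t(c+d)$, so $\lambda/(c+d)=t\ge0$, which is (i). Also $F=S\cap G$, because a face of $\Delta$ is the convex hull of the points of $S$ it contains. For (ii):
\begin{itemize}
\item If $G$ is the vertex $x_0$, then $x_0\in S$, so $F=\{x_0\}$ and $w$ vanishes on $F$.
\item If $G$ is an edge with endpoints $\alpha,\alpha'\in S$, then $x_0$ is a convex combination of $\alpha$ and $\alpha'$ with $w(x_0)=0$. Since $w$ is linear, either one of $w(\alpha),w(\alpha')$ is $0$, or they have opposite signs. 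In both cases $w$ fails to have one strict sign on $F$.
\end{itemize}

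\textbf{Main obstacle.} The delicate step is Step 2: guaranteeing that the normal can be taken integral with $c+d>0$ while cutting out exactly $G$. The hard part is the vertex case, where one must argue that the normal cone at $x_0$ actually meets the open half-plane $\{c+d>0\}$. I would handle it as follows. The ray $x_0-\mathbb R_{>0}(1,1)$ avoids $\Delta$, so $(1,1)$ is not in the tangent cone of $\Delta$ at $x_0$ pointing downward. Polarity then gives a normal $(c,d)$ with $c+d>0$, and openness of the cone allows a rational, hence integral, choice.
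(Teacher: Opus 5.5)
Your proof is correct. It ends up at the same face as the paper's, but it obtains the key inequality $c+d>0$ by a different mechanism.

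The paper first splits off the collinear case. For two-dimensional $\Delta$ it then walks counterclockwise around $\partial\Delta$ to an edge $E=[p,q]$ on which $w$ passes from $\le0$ to $\ge0$ with $w(p)<w(q)$. The inward normal $(-\Delta b,\Delta a)$ is integral by construction, and $c+d=w(q)-w(p)>0$ is a one-line computation. No cone theory and no perturbation are needed.

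You instead characterise the face extremally, as the one carrying the bottom endpoint $x_0$ of the diagonal chord $\Delta\cap\{a=b\}$. You then derive $c+d>0$ from tangent/normal-cone polarity, using the minimality of $t$, together with the observation that no supporting form at a weight-zero point can have $c+d=0$ (it would be a multiple of $w$). The paper's edge in fact always contains your $x_0$: since $\ell(s,s)=(c+d)s$ with $c+d>0$ is minimised on $\Delta$ exactly along $E$, the diagonal point of $E$ is the lowest one.

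What each route buys:
\begin{itemize}
\item Your version is more canonical and handles the segment case without a separate argument. There, ``inward normal'' should be read as ``the normal whose sign makes $c+d>0$''; it exists because the segment is transverse to the diagonal.
\item The paper's version is more elementary and fully explicit.
\end{itemize}

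Your perturbation into the open normal cone in the vertex case is unnecessary. Because $w(x_0)=0$, any integral supporting form at $x_0$ with $c+d>0$ already gives (i) and (ii), whatever face it exposes: a weight-zero vertex, an edge whose endpoints have $w$-values of opposite sign or zero, or all of $\Delta$. Such a form can be chosen rational, since the normal cone at $x_0$ is rational polyhedral; here your remark that $t\in\Q$ is what makes this work.
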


\begin{proof}
\emph{Case 1: $\Delta$ is one-dimensional}, i.e.\ $S$ is contained in a line.
Let $(\Delta a,\Delta b)$ be a direction vector of that line, taken to be an
integral vector, and set $(c,d)=\pm(-\Delta b,\Delta a)$ with the sign chosen so
that $c+d=\pm(\Delta a-\Delta b)>0$; this is possible because $w$ is not
constant on $S$, so $\Delta a-\Delta b\ne0$. Then $\ell$ is constant on the
line, so $F=S$ and (ii) holds by hypothesis. For (i): $w$ takes both signs on
the segment $\Delta$ and is affine there, so $\Delta$ meets the diagonal at some
point $(t,t)$; since $\Delta\subset\mathbb R_{\ge0}^2$ we have $t\ge0$, and
$\lambda=\ell(t,t)=(c+d)t$, whence $\lambda/(c+d)=t\ge0$.

\emph{Case 2: $\Delta$ is two-dimensional.} The maximum of $w$ over $\Delta$
equals its maximum over $S$, which is positive, and is attained at a vertex;
similarly some vertex has $w<0$. Traversing the boundary of $\Delta$
counterclockwise, the continuous piecewise affine function $w$ therefore takes
both signs, so there is an edge $E$ from $p$ to $q$ (in the counterclockwise
order) with $w(p)\le0\le w(q)$ and $w(p)<w(q)$. Let
$(\Delta a,\Delta b)=q-p$, an integral vector, and put
$(c,d)=(-\Delta b,\Delta a)$, dividing by $\gcd$ to make it primitive. For a
counterclockwise-oriented convex polygon, $(-\Delta b,\Delta a)$ is the inward
normal of $E$, so $\ell$ attains its minimum over $\Delta$ exactly on $E$; in
particular $F=S\cap E$ and $\lambda=\ell(E)$. Its coordinate sum is
$c+d=\Delta a-\Delta b=w(q)-w(p)>0$. Since $w$ is affine on $E$ with
$w(p)\le0\le w(q)$, the edge meets the diagonal at some $(t,t)\in E$, and
$t\ge0$ because $E\subset\mathbb R_{\ge0}^2$; hence
$\lambda=\ell(t,t)=(c+d)t$ and $\lambda/(c+d)=t\ge0$, which is (i). Finally the
endpoints $p,q$ are vertices of $\Delta$, hence lie in $S$, hence lie in $F$;
as $w(p)\le0\le w(q)$, statement (ii) holds.
\end{proof}

\section{Proof of Theorem \ref{thm:main}}\label{sec:proof}

Let $P\in\C[X,Y]$ satisfy $\E(P^m)=0$ for every $m\ge1$, and let $S=\supp P$ in
the coordinates \eqref{eq:coords}. If $P=0$ there is nothing to prove, so assume
$S\ne\emptyset$.

\smallskip
\noindent\emph{Step 1: $S$ does not carry weights of both signs.} Suppose it
did. By Lemma \ref{lem:spec} there is $P'\in\Qbar[Z,W]$ with $\supp P'=S$ and
$\E(P'^m)=0$ for every $m\ge1$. Apply Lemma \ref{lem:edge} to $S$, obtaining an
integral form $\ell$ with $c+d>0$ and $\lambda/(c+d)\ge0$ whose exposed face $F$
carries a weight equal to zero, or weights of both signs. Apply Lemma
\ref{lem:isolation} to $P'$ and this $\ell$: all weights occurring on $F$ have
the same strict sign, the passage from the exponents of $R$ to the weights on
$F$ being justified by Remark \ref{rem:injective}. These two assertions about
$F$ contradict each other. Hence no polynomial over $\Qbar$ with support $S$
satisfies \eqref{eq:hyp}, and by Lemma \ref{lem:spec} none over $\C$ does
either (Remark \ref{rem:supportonly}). So every weight occurring in $P$ is
nonnegative, or every weight occurring in $P$ is nonpositive.

\smallskip
\noindent\emph{Step 2: the weight-zero part vanishes.} A monomial of weight
zero is $Z^aW^a=U^a$ with $U=ZW$, so the weight-zero part of $P$ is $C(U)$ for
some $C\in\C[U]$. By Step 1 all other monomials of $P$ have weight of one strict
sign, so a product of $m$ monomials of $P$ has weight zero only if every factor
is drawn from $C(U)$; hence the weight-zero part of $P^m$ is exactly $C(U)^m$.
By \eqref{eq:moments},
\[
  0=\E(P^m)=\mathcal L(C^m),\qquad \mathcal L(U^j)=j! \qquad (m\ge1).
\]
The one-variable case of the Factorial Conjecture, proved by van den Essen,
Wright and Zhao \cite[Thm.~4.9]{vdEWZ}, states that $\mathcal L(C^m)=0$ for
every $m\ge1$ forces $C=0$. Hence $P$ has no weight-zero monomial, and by Step
1 every monomial of $P$ has weight at least $1$, or every monomial has weight at
most $-1$.

\smallskip
\noindent\emph{Step 3: mixed moments.} Lemma \ref{lem:weights} now gives
$\E(QP^m)=0$ for every $Q\in\C[X,Y]$ and every $m>\deg Q$. \qed

\section{Remarks and corollaries}\label{sec:remarks}

\begin{remark}[Why the argument is specific to $n=2$]\label{rem:scope}
Since $\GMC(n)$ is false for $n\ge3$ \cite{Long}, it is worth identifying where
the proof uses $n=2$. With $r$ complex conjugate pairs $Z_j,W_j$ the weight
takes values in $\Z^r$, and the face weight polynomial $R$ is a Laurent
polynomial in $r$ variables. The Duistermaat--van der Kallen theorem still
applies and yields $0\notin\Newt(R)$, but for $r\ge2$ this is strictly weaker
than one-sidedness of the weights, so the contradiction in Step 1 is
unavailable; moreover Lemma \ref{lem:edge} is a statement about plane convex
geometry with no analogue in $\mathbb R^{r}$ for $r\ge2$. Step 2 also fails for
$r\ge2$, since the weight-zero part is then a polynomial in $U_1,\dots,U_r$ and
the multivariate Factorial Conjecture is not available. Both failures are
visible in Long's four-variable example
$P_4=(1+Z_2)\bigl(W_1(1-Z_1)+W_2\bigr)$, whose six monomials have weights
$(-1,0)$, $(0,-1)$, $(-1,1)$, $(0,0)$, $(0,0)$ and $(0,1)$: weight zero occurs,
and the weight-zero part is not a polynomial in a single $U$.
\end{remark}

\begin{corollary}[Quadratics]\label{cor:quadratic}
Let $P\in\C[X,Y]$ have total degree at most two and satisfy $\E(P^m)=0$ for
every $m\ge1$. Then $P=aZ+cZ^2$ or $P=bW+eW^2$ for some constants.
\end{corollary}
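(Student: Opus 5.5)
The plan is to apply Theorem~\ref{thm:main} directly and then rule out the remaining monomials with a single moment computation. Since $\deg P\le 2$ in $X,Y$, and hence in $Z,W$, the possible monomials are $1,Z,W,Z^2,ZW,W^2$. Their weights are $0,1,-1,2,0,-2$.

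By Theorem~\ref{thm:main}, either every monomial of $P$ has weight $\ge1$, or every monomial has weight $\le-1$. In the first case only $Z$ and $Z^2$ can occur, so $P=aZ+cZ^2$. In the second case only $W$ and $W^2$ can occur, so $P=bW+eW^2$. This already gives the stated form: the constant term and the $ZW$ term are excluded because they have weight zero, and mixed forms such as $Z+W$ are excluded by one-sidedness.

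The converse does not need to be proved, since the statement only claims the restriction. Still, it is worth recording that by Lemma~\ref{lem:weights} every such $P$ does satisfy $\E(P^m)=0$, so the list is sharp.

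No step is a real obstacle; the content is entirely carried by Theorem~\ref{thm:main}. The only care needed is the degree bookkeeping, namely that the linear change \eqref{eq:coords} preserves total degree, which the paper notes explicitly.

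As a sanity check independent of the theorem, I would also test $P=\gamma+aZ+bW+\delta ZW$ at $m=1,2$:
\begin{itemize}
\item $m=1$ gives $\gamma+\delta=0$.
\item $m=2$ gives $\gamma^2+2ab+2\gamma\delta+2\delta^2+\dots=0$.
\end{itemize}
This confirms that the constraints are genuinely restrictive, though it is not needed for the proof.
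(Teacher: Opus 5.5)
Your proposal is correct and matches the paper's proof: invoke Theorem~\ref{thm:main}, then note that among $1,Z,W,Z^2,ZW,W^2$ only $Z,Z^2$ have weight $\ge1$ and only $W,W^2$ have weight $\le-1$. The ``single moment computation'' promised in your opening sentence plays no role, as you later note yourself, and the $m=2$ expression $\gamma^2+2ab+2\gamma\delta+2\delta^2$ is already complete, so the trailing ``$+\dots$'' should be dropped.
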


\begin{proof}
By Theorem \ref{thm:main} all monomials of $P$ have weight at least $1$, or all
at most $-1$. The monomials of degree at most two and weight at least $1$ are
exactly $Z$ and $Z^2$; those of weight at most $-1$ are exactly $W$ and $W^2$.
\end{proof}

\begin{corollary}[Contractions]\label{cor:contraction}
Let $H\in\C[Z]$ and $P=WH(Z)$ satisfy $\E(P^m)=0$ for every $m\ge1$. Then $H$ is
constant or $H$ vanishes to order at least two at the origin.
\end{corollary}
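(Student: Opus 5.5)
The plan is to deduce the corollary directly from Theorem \ref{thm:main}, by reading off the weights of the monomials of $P=WH(Z)$. Write $H(Z)=\sum_{j\ge0}h_jZ^j$. Then
\[
  P=\sum_j h_jZ^jW,
\]
and the monomial $Z^jW$ has weight $j-1$. Distinct $j$ give distinct monomials, so no cancellation occurs, and the support of $P$ is exactly $\{(j,1):h_j\ne0\}$. If $H=0$ then $H$ is constant and there is nothing to prove, so assume $H\ne0$.

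By Theorem \ref{thm:main}, all the weights $j-1$ with $h_j\ne0$ are at least $1$, or all are at most $-1$. This splits into two cases.

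\emph{All weights at most $-1$.} Then every $j$ with $h_j\ne0$ satisfies $j\le0$, so $j=0$ and $H=h_0$ is constant.

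\emph{All weights at least $1$.} Then every $j$ with $h_j\ne0$ satisfies $j\ge2$, so $h_0=h_1=0$, i.e.\ $H$ vanishes to order at least two at the origin.

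In particular, the case excluded by Theorem \ref{thm:main} is exactly $h_1\ne0$ (weight $0$) or $h_0$ occurring together with some $h_j$, $j\ge2$ (weights of both signs). These are precisely the shapes the corollary rules out.

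There is no real obstacle here. The only point to check is that expressing $P$ in the coordinates \eqref{eq:coords} is literally the expansion above. That holds because $H$ is given as a polynomial in $Z$ and the extra factor is $W$.
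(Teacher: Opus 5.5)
Your proposal is correct and is essentially the paper's own proof: you read off the weight $j-1$ of each monomial $h_jZ^jW$ and apply the one-sidedness conclusion of Theorem~\ref{thm:main}, which forces $j=0$ in the negative case and $j\ge2$ in the positive case. Your separate treatment of $H=0$ and your check that distinct $j$ give distinct monomials are harmless additions to the same argument.
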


\begin{proof}
The monomials of $P$ are $h_jWZ^j$, of weight $j-1$. By Theorem \ref{thm:main}
either all occurring $j$ satisfy $j-1\ge1$, i.e.\ $\mathrm{ord}\,H\ge2$, or all
satisfy $j-1\le-1$, i.e.\ $j=0$ and $H$ is constant.
\end{proof}

Corollaries \ref{cor:quadratic} and \ref{cor:contraction} admit direct proofs
independent of the present argument ,  the first by a monodromy argument
applied to the closed form of $\E(e^{tP})$ for quadratic $P$, the second by the
observation that $\E(P^m)=m!\,[z^m]H(z)^m$ followed by an elementary induction
on the coefficients of $H$ ,  and the agreement of those proofs with Theorem
\ref{thm:main} is a useful consistency check on it.

\end{document}